\theoremstyle{definition}
\newtheorem{definition}{Definition}[section]
\theoremstyle{plain}
\newtheorem{theorem}[definition]{Theorem}
\theoremstyle{remark}
\newcommand{\trf}[2]{\tr_{\mathbb{F}_{#1}/\mathbb{F}_{#2}}}
\DeclareMathOperator{\tr}{Tr}
\DeclareMathOperator{\Frob}{Frob}
\DeclareMathOperator{\supp}{supp}
\title[Subfield Subcodes of Hermitian Codes]{On The Dimension of The Subfield Subcodes of 1-Point Hermitian Codes}
\author{Sabira El Khalfaoui}
\address{Bolyai Institute \\
	University of Szeged \\
	Aradi v\'ertan\'uk tere 1\\
	H-6720 Szeged, Hungary}
\email{sabira@math.u-szeged.hu}
\author{G\'abor P. Nagy}
\address{Department of Algebra \\
	Budapest University of Technology and Economics\\
	Egry J\'ozsef utca 1\\
	H-1111 Budapest, Hungary}
\address{Bolyai Institute \\
	University of Szeged \\
	Aradi v\'ertan\'uk tere 1\\
	H-6720 Szeged, Hungary}
\email{nagyg@math.u-szeged.hu}
\thanks{Support provided from the National Research, Development and Innovation Fund of Hungary, financed under the 2018-1.2.1-NKP funding scheme, within the SETIT Project (2018-1.2.1-NKP-2018-00004). Partially supported by NKFIH-OTKA Grants 119687 and 115288.}
\keywords{AG code, Hermitian code, subfield subcode}
\subjclass[2010]{ 11T71, 14G50, 94B27}
\begin{document}

\begin{abstract}
Subfield subcodes of algebraic-geometric codes are good candidates for the use in post-quantum cryptosystems, provided their true parameters such as dimension and minimum distance can be determined. In this paper we present new values of the true dimension of subfield subcodes of $1$--point Hermitian codes, including the case when the subfield is not binary.
\end{abstract}

\maketitle

%%%%%%%%%%%%%%%%%%%%%%%%%%%%%%%%%%%%%%%%%%%%%%%%%%%%%%%%%%%%%
\section{Introduction}

The oldest and best known proposal for post-quantum cryptography schemes are the cryptosystems due to McEliece and Niederreiter. Their security is based on the NP-completeness of the decoding of binary linear codes. Hence, an essential ingredient of their schemes is a binary linear code $C$ which has an efficient decoding algorithm and which cannot be distinguished from the random linear code. McEliece originally proposed the class of extended binary Goppa codes, which are subfield subcodes of the generalized Reed-Solomon codes. Recently, some other classes of codes have been proposed as well, such as LDPC codes and algebraic-geometric codes over larger fields. However, these classes turned out to have serious security flows, see \cite{baldi2009ldpc,bernstein2008attacking,li1994equivalence,LoidreauSendrier,wieschebrink2010cryptanalysis}. For the background of code based cryptography we refer to  \cite{menezes2013applications,stepanov2012codes,stichtenoth2009algebraic}, for quantum attacks see \cite{yan2013quantum}, and on digital signature schemes based on the Niederreiter scheme see \cite{courtois2001achieve}. 

The Berlekamp-Massey algorithm \cite{elia1999decoding} and its variants provide an efficient decoding for Reed-Solomon codes, which can be used to decode subfield subcodes of generalized Reed-Solomon codes, as well. For the binary linear code $C$ in use, the error correcting bound is determined by these algorithms. Beyond this bound, list-decoding methods are known, cf. \cite{augot2011list,bernstein2011list,misoczki2009compact}. Therefore, it is an important problem to find the true minimum distance and the true dimension of subfield subcodes of generalized Reed-Solomon codes, cf. \cite{Del} and the series of papers \cite{V98,V01,V05}. 
The class of algebraic-geometry (AG) codes was introduced by V.D.~Goppa. This class is a natural generalization of Reed-Solomon codes. The famous Riemann-Roch Theorem provides theoretical bounds for the dimension and minimum distance of AG codes. The ideas of the Berlekamp-Massey algorithm can be used to design efficient decoding algorithms up to the half of the designed minimum distance of AG codes, and beyond \cite{hoholdt1995decoding,pellikaan1993efficient,sakata1995generalized}. Hence, the subfield subcodes of AG codes are also good candidates for the McEliece and Niederreiter cryptosystems. The determination of the true dimension and the true minimum distance of the subfield subcodes of AG codes seems to be a hard problem, the attempts so far focused mainly at 1-point Hermitian codes and their subcodes, with some further restrictions on the parameters \cite{pinero2014subfield,van1990true,van1991dimension}. 

In this paper, we prove new results on the true dimension of the subfield subcodes of 1-point Hermitian codes. Our approach deals also with non-binary subfields. The paper is structured as follows. In section \ref{sec:hermitiancurves}, we describe the backgrounds with some important properties of Hermitian curves, their function fields and Riemann-Roch spaces. In section \ref{sec:agcodes}, we present AG codes and $1$-point Hermitian codes. Section \ref{sec:subfieldsubcodes} summarizes the definition of the subfield subcodes of AG codes and techniques used to improve the bounds on the dimensions of subfield subcodes of Reed-Solomon codes, these techniques include Delsarte's seminal result on subfield subcodes and trace codes. Section \ref{sec:mainresult} is dedicated to prove our result concerning the true dimension of the subfield subcodes of 1-point Hermitian codes for specific parameters.

%%%%%%%%%%%%%%%%%%%%%%%%%%%%%%%%%%%%%%%%%%%%%%%%%%%%%%%%%%%%%
\section{Hermitian curves, their divisors and Riemann-Roch spaces}
\label{sec:hermitiancurves}

Our notation and terminology on algebraic plane curves over finite fields, their function fields, divisors and Riemann-Roch spaces are standard, see for instance \cite{hirschfeld2008algebraic,menezes2013applications,stichtenoth2009algebraic}. 

Let $PG(2,\mathbb{F}_{q^2})$ be the projective plane over the finite field of order $q^2$ equipped with homogeneous coordinates $(X,Y,Z)$. The Hermitian curve in its canonical form is the non-singular plane curve $\mathscr{H}_{q}$ with equation $Y^qZ+YZ^q=X^{q+1}$. The genus of $\mathscr{H}_q$ equals $g=q(q-1)/2$ and the set $\mathscr{H}_q(\mathbb{F}_{q^2})$ of $\mathbb{F}_{q^2}$-rational points, that is, its points with coordinates over $\mathbb{F}_{q^2}$ has size $q^3+1$. It is also useful to regard $PG(2,\mathbb{F}_{q^2})$ as the projective closure of the affine plane $AG(2,\mathbb{F}_{q^2})$ with respect to the line $Z=0$ at infinity, so that the $\mathscr{H}_q$ has affine equation $Y^q+Y=X^{q+1}$. In particular, $\mathscr{H}_q$ has just one point at infinity, namely $(0,1,0)$, denoted by $P_\infty$. We remark that Hermitian curves have the maximum number of rational points allowed by the Hasse-Weil bound \cite[Theorem 9.18]{hirschfeld2008algebraic}, \cite[Theorem 9.10]{menezes2013applications}.

The action of the Frobenius automorphism $\Frob_{q^2}:x\mapsto x^{q^2}$ can be extended to the points of $\mathscr{H}_q$ by applying $\Frob_{q^2}$ on the coordinates. We denote the extended action by $\Frob_{q^2}$ as well. A point $P$ of $\mathscr{H}_q$ is $\mathbb{F}_{q^2}$-rational if and only if $P=\Frob_{q^2}(P)$. 

As usual, we also look at the curve $\mathscr{H}_q$ as the curve defined over the algebraic closure $\bar{\mathbb{F}}_{q^2}$. Then, there is a one-to-one correspondence between the points of $\mathscr{H}_q$ and the places of the function field $\bar{\mathbb{F}}_{q^2}(\mathscr{H}_q)$ of $\mathscr{H}_q$. 

For a divisor $D=\lambda_1P_1+\cdots+\lambda_kP_k$ with $P_1,\ldots,P_k \in \mathscr{H}_q$ and integers $\lambda_1,\ldots,\lambda_k$, its Frobenius image is  \[\Frob_{q^2}(D)=\lambda_1 \Frob_q(P_1)  + \cdots + \lambda_k \Frob_q(P_k).\]
A divisor $D$ is $\mathbb{F}_{q^2}$-rational if $D=\Frob_{q^2}(D)$. In particular, if $P_1,\ldots,P_k$ are in $\mathscr{H}_q(\mathbb{F}_{q^2})$ then $D$ is $\mathbb{F}_{q^2}$-rational, but the converse does not hold in general. The degree of $D$ is $\deg(D)=\lambda_1+\cdots+\lambda_k$, while the support of $D$ is the set of points $P_i$ with $\lambda_i\neq 0$. 

For a non-zero function $f$ in the function field $\bar{\mathbb{F}}_{q^2}(\mathscr{H}_q)$ and a point $P$, $v_P(f)$ stands for the order of $f$ at $P$. If $v_P(f)>0$ then $P$ is a zero of $f$, while if $v_P(f)<0$, then $P$ is a pole of $f$ with multiplicity $-v_P(f)$. The principal divisor of a non-zero function $f$ is $(f) = \sum_P v_P(f)P$. 

For an $\mathbb{F}_{q^2}$-rational divisor $D$, the Riemann-Roch space $\mathscr{L}(D)$ is the vector space
\[\mathscr{L}(D) = \{ f \in {\mathbb{F}}_{q^2}(\mathscr{H}_q) \mid (f)\succcurlyeq -D \}.\]
For the dimension $\ell(D)$ of $\mathscr{L}(D)$ the Riemann-Roch Theorem states
\[\ell(D)=\deg(D)+1-g+\ell(W-D),\]
where $W$ is a canonical divisor of the Hermitian curve, for example $W=(q-2)(q+1)P_\infty$ is such a canonical divisor. Moreover, if $\deg(D)>2g-2$ then the Riemann-Roch Theorem reads $\ell(D)=\deg(D)+1-g$. Let $s$ be a positive integer and $D=sP_\infty$ a $1$-point divisor of $\mathscr{H}_q$. Then the set
\[
\left\lbrace x^i y^j  \mid 0\leq i \leq q^2 -1, \quad 0\leq j \leq q-1, \quad v_{P_\infty}(x^i y^j ) \leq s\right\rbrace
\]
of functions forms a basis of the Riemann-Roch space  $\mathscr{L}(sP_{\infty})$, see \cite[Theorem 10.4]{menezes2013applications}. Notice that $v_{P_\infty}(x)=q$, $v_{P_\infty}(y)=q+1$, and hence the order of $x^iy^j$ at $P_\infty$ is
\begin{equation} \label{eq:vPinfty}
v_{P_\infty}(x^iy^j)=qi+(q+1)j.
\end{equation}

%%%%%%%%%%%%%%%%%%%%%%%%%%%%%%%%%%%%%%%%%%%%%%%%%%%%%%%
\section{Algebraic geometry codes (briefly AG codes)}
\label{sec:agcodes}

Algebraic geometry codes are a type of linear error correcting block codes, arising from algebraic curves defined over a finite field, see \cite{stichtenoth2009algebraic}. Here we outline the construction when the underlying curve is the Hermitian curve $\mathscr{H}_q$. 

Fix a divisor $D=P_1+...+P_n$ where all $P_i$ are pairwise distinct $\mathbb{F}_{q^2}$--rational points of $\mathscr{H}_q$. Also, take another $\mathbb{F}_{q^2}$-rational divisor $G$ whose support is disjoint from  $\supp\, D$. The functional AG code $C_L(D,G)$ associated with the divisors $D$ and $G$ is a subspace of the vector space $\mathbb{F}_{q^2}^n$, and defined by
\[
C_L(D,G)=\left\lbrace \left( f(P_1),...,f(P_n) \right) \,|\, f \in \mathscr{L}(G)  \right\rbrace \subseteq \mathbb{F}_{q^2}^{n}. 
\]
In other words, $C_L(D,G)$ is the image of $\mathscr{L}(G)$ under the evaluation map
\[
\mathscr{L}(G)\ni f \mapsto (f(P_1),...,f(P_n))\in \mathbb{F}_{q^2}^n.
\]

Indeed, determining the functions field and the divisors in a pertinent way can make Reed-Solomon codes viewed as particular AG codes, see \cite[Section 2.3]{stichtenoth2009algebraic}. The most fascinating feature of AG codes is that the Riemann-Roch Theorem determines its dimension $k$ and provides a useful bound for its minimum distance $d$.

\begin{theorem}[{\cite[Theorem 10.1]{menezes2013applications}}]
\label{thm:C_Lparam}
	$C_L(D,G)$ is a linear $\left[ n,k,d \right] $ code over $\mathbb{F}$ with parameters:
	
	\begin{itemize}
		\item $k = \ell (G) - \ell (G-D)$,
		\item $d \geq n - \deg G$.
	\end{itemize}
\end{theorem}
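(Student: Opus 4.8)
The plan is to study the \emph{evaluation map}
\[
\mathrm{ev}\colon \mathscr{L}(G)\to\mathbb{F}_{q^2}^n,\qquad f\mapsto(f(P_1),\dots,f(P_n)),
\]
whose image is, by definition, $C_L(D,G)$. First I would check that $\mathrm{ev}$ is well defined and $\mathbb{F}_{q^2}$-linear: since $\supp G$ is disjoint from $\supp D=\{P_1,\dots,P_n\}$, every $f\in\mathscr{L}(G)$ satisfies $v_{P_i}(f)\ge 0$, so $f$ has no pole at $P_i$ and the scalar $f(P_i)$ is well defined; linearity is immediate. Consequently $C_L(D,G)\subseteq\mathbb{F}_{q^2}^n$ is a linear code of length $n$, and by the rank-nullity theorem $k=\dim C_L(D,G)=\ell(G)-\dim\ker(\mathrm{ev})$.

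The next step is to identify the kernel. A function $f\in\mathscr{L}(G)$ lies in $\ker(\mathrm{ev})$ exactly when $f(P_i)=0$, i.e. $v_{P_i}(f)\ge 1$, for every $i$; combined with $(f)\succcurlyeq -G$, this is equivalent to $(f)\succcurlyeq -G+D=-(G-D)$, that is, $f\in\mathscr{L}(G-D)$. Conversely, any $f\in\mathscr{L}(G-D)$ already lies in $\mathscr{L}(G)$ because $D\succcurlyeq 0$, and it vanishes at each $P_i$. Hence $\ker(\mathrm{ev})=\mathscr{L}(G-D)$ and $k=\ell(G)-\ell(G-D)$, which is the first claim. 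I would also record here the elementary fact that a nonzero $f$ with $(f)\succcurlyeq -A$ forces $\deg A\ge 0$, since $\deg(f)=0$; in particular, if $\deg G<n$ then $\deg(G-D)<0$, so $\ell(G-D)=0$ and $\mathrm{ev}$ is injective.

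For the distance bound, take a nonzero codeword $c=\mathrm{ev}(f)$ of Hamming weight $w$; then $f\ne 0$, and $f$ vanishes at the $n-w$ points $P_{i_1},\dots,P_{i_{n-w}}$ at which $c$ has a zero coordinate. Thus $0\ne f\in\mathscr{L}\bigl(G-(P_{i_1}+\dots+P_{i_{n-w}})\bigr)$, so this Riemann-Roch space is nonzero and therefore its defining divisor has nonnegative degree: $\deg G-(n-w)\ge 0$, i.e. $w\ge n-\deg G$. Taking the minimum over all nonzero codewords gives $d\ge n-\deg G$.

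Since every ingredient is a direct consequence of the definitions together with the observation that $\mathscr{L}$ of a negative-degree divisor is trivial, I do not expect a serious obstacle here; the only point requiring a little care is the clean identification $\ker(\mathrm{ev})=\mathscr{L}(G-D)$, where one must use the disjointness of $\supp G$ and $\supp D$ in one direction and $D\succcurlyeq 0$ in the other.
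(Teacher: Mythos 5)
Your proof is correct. The paper itself gives no proof of this statement---it is quoted from the cited reference---and your argument (evaluation map, identification of the kernel with $\mathscr{L}(G-D)$ via rank–nullity, and the weight bound from the fact that a divisor admitting a nonzero function in its Riemann--Roch space has nonnegative degree) is exactly the standard textbook proof, including the correct use of the disjointness of $\supp G$ and $\supp D$ and of $D\succcurlyeq 0$; your side remark that $\deg G<n$ forces injectivity also matches the paper's comment following the theorem.
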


Notice that the condition $n>\deg G$ implies the evaluation map $\mathscr{L}(G)\to \mathbb{F}^n$ to be injective. If $n\leq \deg G$, then it is possible that $C_L(D,G)$ has dimension less than $n$ and positive true minimum distance. However, this case cannot be described only by the Riemann-Roch Theorem. 

By using the differential space $\Omega(G)$ instead of the Riemann-Roch space $\mathscr{L}(G)$, one can define another AG codes, namely the differential AG codes $C_{\Omega}(D,G)$. It should be noted that the differential code $C_{\Omega}(D,G)$ is the dual of the functional AG code $C_L(D,G)$.

The main result of this paper deals with $1$-point Hermitian codes. Let $n=q^3$ and the divisor $D=P_1 + P_2 +...+P_n$ be the sum of $\mathbb{F}_{q^2}$--rational affine points of $\mathscr{H}_q$. For a positive integer $s$, we denote by $\mathcal{H}(q^2, s)$ the $1$-point functional AG code $C_L(D,sP_{\infty})$. This has length $n= q^3$. If  $2g-2 < s < n$, then the dimension of $\mathcal{H}(q^2,s) $ is $k= s - g +1$ which is equal to the dimension of the Riemann-Roch space $\mathscr{L}(sP_{\infty})$. Under these assumptions, we have equality in Theorem \ref{thm:C_Lparam}, hence the minimum distance of $\mathcal{H}(q^2,s)$ is $d= q^3 -s$.

\begin{theorem}[Dual codes {\cite[Theorem 10.5]{menezes2013applications}}]
	For $s>0$ and $\tilde{s}= q^3 + q^2 - q-2-s$, the codes $\mathcal{H}(q^2, s)$ and $\mathcal{H}(q^2, \tilde{s})$ are dual to each other. 
	In particular, if $q$ is even and $s= (q^3 +q^2-q-2)/2$, then the code $\mathcal{H}(q^2, s)$ is self-dual.
\end{theorem}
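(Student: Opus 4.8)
The plan is to deduce this from the general duality theory of AG codes. Recall that $C_\Omega(D,G)$ is the dual of $C_L(D,G)$, and that a standard fact \cite{stichtenoth2009algebraic} asserts the following: if there is a Weil differential $\eta$ with $v_{P_i}(\eta)=-1$ and residue $\operatorname{res}_{P_i}(\eta)=1$ at every point $P_i$ of $\supp D$, then $C_\Omega(D,G)=C_L(D,\,D-G+(\eta))$. So the whole statement reduces to producing such an $\eta$ for the divisor $D=P_1+\cdots+P_{q^3}$ (the sum of all affine $\mathbb{F}_{q^2}$-rational points of $\mathscr{H}_q$) and computing its divisor $(\eta)$.

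The candidate is $\eta=dx/(x-x^{q^2})$. First I would record that the $q^3$ affine $\mathbb{F}_{q^2}$-rational points of $\mathscr{H}_q$ are exactly the zeros of $x^{q^2}-x$ on the curve: for $a\in\mathbb{F}_{q^2}$ the norm $a^{q+1}$ lies in $\mathbb{F}_q$, and $y\mapsto y^q+y$ maps $\mathbb{F}_{q^2}$ onto $\mathbb{F}_q$ with a kernel of size $q$, so there are exactly $q$ points with first coordinate $a$. Since $\partial_y(y^q+y-x^{q+1})=qy^{q-1}+1=1$ identically (the characteristic divides $q$), the map $x\colon\mathscr{H}_q\to\mathbb{P}^1$ is unramified over the affine line, so $x-a$ is a local uniformizer at each affine point over $a$, while $v_{P_\infty}(x)=-q$; hence $(x-x^{q^2})=(x^{q^2}-x)=D-q^3P_\infty$.

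Next I would compute the two ingredients of $(\eta)$. Because $x-a$ is a uniformizer at every affine point, $dx=d(x-a)$ has valuation $0$ there, so $(dx)$ is supported only at $P_\infty$; comparing degrees gives $(dx)=(2g-2)P_\infty=(q^2-q-2)P_\infty$, consistent with the canonical divisor $W=(q-2)(q+1)P_\infty$. A local computation at a point $P$ over $a$, using $x^{q^2}-x=\prod_{c\in\mathbb{F}_{q^2}}(x-c)$ and the fact that the product of the factors with $c\neq a$ takes the value $-1$ at $x=a$, shows $v_P(\eta)=-1$ and $\operatorname{res}_P(\eta)=1$. Thus $\eta$ is admissible, and $(\eta)=(dx)-(x-x^{q^2})=(q^2-q-2)P_\infty-(D-q^3P_\infty)=(q^3+q^2-q-2)P_\infty-D$.

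Combining these, $\mathcal{H}(q^2,s)^\perp=C_\Omega(D,sP_\infty)=C_L(D,\,D-sP_\infty+(\eta))=C_L(D,(q^3+q^2-q-2-s)P_\infty)=\mathcal{H}(q^2,\tilde s)$, which is the first assertion. The self-dual case then follows by imposing $s=\tilde s$, i.e. $2s=q^3+q^2-q-2$; the right-hand side is even precisely when $q$ is even, and in that case $s=(q^3+q^2-q-2)/2$ is the unique integer solution, whence $\mathcal{H}(q^2,s)^\perp=\mathcal{H}(q^2,s)$. The one genuinely technical step is the differential bookkeeping — pinning down $(dx)$ and verifying the residue normalization of $\eta$ at the affine points — which rests entirely on the uniformizer $x-a$ and the factorization of $x^{q^2}-x$; everything else is formal.
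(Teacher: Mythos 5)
Your argument is correct. Note that the paper itself gives no proof of this statement: it is quoted as \cite[Theorem 10.5]{menezes2013applications}, so there is no ``paper proof'' to compare against; what you have written is essentially the classical argument (going back to Tiersma and Stichtenoth) that underlies the cited result. All the key steps check out: the fibre count showing $(x^{q^2}-x)=D-q^3P_\infty$; the observation that $\partial_y(y^q+y-x^{q+1})\equiv 1$ makes $x-a$ a uniformizer at \emph{every} affine point of the curve over the algebraic closure, so that $(dx)$ is supported at $P_\infty$ alone and the degree count forces $(dx)=(2g-2)P_\infty=(q^2-q-2)P_\infty$; the residue computation for $\eta=dx/(x-x^{q^2})$, where the sign is handled correctly since $\prod_{c\neq a}(a-c)=-1$, giving $v_{P}(\eta)=-1$ and $\operatorname{res}_P(\eta)=1$ at each $P\in\supp D$; and the resulting identity $C_L(D,sP_\infty)^{\perp}=C_\Omega(D,sP_\infty)=C_L(D,(q^3+q^2-q-2-s)P_\infty)$, from which the self-dual case for even $q$ follows by solving $s=\tilde s$. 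The only cosmetic discrepancy is notational: the paper uses $v_{P_\infty}$ for the pole order (so $v_{P_\infty}(x)=q$), whereas you use the standard valuation convention ($v_{P_\infty}(x)=-q$); your usage is internally consistent, but if this text were merged into the paper the convention should be aligned.
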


\section{Subfield subcodes of linear codes}
\label{sec:subfieldsubcodes}

Let $\mathbb{F}_r$ be a subfield of a finite field $\mathbb{F}_l$, that is, $l=r^h$ for a positive integer $h$. Let $C$ be a linear $\left[n,k,d \right] $ code over $\mathbb{F}_l$. The subfield subcode $C|\mathbb{F}_r$ consists of all codewords of $C$ whose coordinates are in $\mathbb{F}_r$, that is, 
\[
		C|\mathbb{F}_r = C \cap \mathbb{F}_r^n.
\]

This is a linear $\left[ n,k_0,d_0\right] $ code over $\mathbb{F}_r$ with $d \leq d_0 \leq n$ and $n-k \leq n-k_0\leq h(n-k)$. Therefore for the dimension over $\mathbb{F}_r$
\begin{equation} \label{eq:firstbound}
k_0\geq n-h(n-k).
\end{equation}
A parity check matrix of $C$ over $\mathbb{F}_l$ yields at most $h(n-k)$ linearly independent parity check equations over $\mathbb{F}_r$ for the subfield subcode $C|\mathbb{F}_r$.

In general the true minimum distance of a subfield subcodes  is bigger than the minimum distance of the original code. This makes the subfield subcodes very important, especially in the binary case $r=2$, see \cite[Theorem~4]{Del}.

\medskip

The trace polynomial $\tr(X) \in \mathbb{F}_r[x]$ with respect to $\mathbb{F}_l$ is given by
\[
\trf{l}{r}(x)= x + x^r + ... + x^{r^{h-1}}.
\]
Clearly, the trace polynomial determines the $\mathbb{F}_r$-linear trace map $\mathbb{F}_l\to \mathbb{F}_r$. For a linear code $C$ over $\mathbb{F}_l$, Delsarte defined the trace code $\tr(C)=\trf{l}{r}(C)$ by
\[\tr(C)= \left\lbrace \left( \trf{l}{r}(c_1),...,\trf{l}{r}(c_n)\right)\, \vert \, \left( c_1,\dots,c_n\right)   \in C \right\rbrace,\] 
and showed that $\tr(C)$ is a linear $\left[ n,k_1,d_1\right] $ code over $\mathbb{F}_r$, with $1 \leqslant d_1 \leqslant d$ and $k\leqslant k_1 \leqslant hk$. As for subfield subcodes, the most useful case occurs for $r=2$. 

The following important result by Delsarte relates the class of subfield subcodes to trace codes:
\begin{theorem}[Delsarte \cite{Del}]
Let $C$ be a linear code over an extension field $\mathbb{F}_l$ of $\mathbb{F}_r$. Then $(C|\mathbb{F}_r)^{\bot} = \tr(C^{\bot})$ holds.
\end{theorem}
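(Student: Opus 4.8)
The plan is to prove Delsarte's theorem $(C|\mathbb{F}_r)^{\bot} = \tr(C^{\bot})$ by establishing the two inclusions separately, relying on the standard bilinear pairing $\langle u,v\rangle = \sum_{i=1}^n u_iv_i$ on $\mathbb{F}_r^n$ and its $\mathbb{F}_l$-bilinear extension to $\mathbb{F}_l^n$, together with the basic adjointness property of the trace: for $a\in\mathbb{F}_l$ and $b\in\mathbb{F}_r$ one has $\trf{l}{r}(ab)=b\cdot\trf{l}{r}(a)$, and the trace form $\mathbb{F}_l\times\mathbb{F}_l\to\mathbb{F}_r$, $(a,b)\mapsto\trf{l}{r}(ab)$ is non-degenerate. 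Componentwise, this gives for $x\in\mathbb{F}_l^n$ and $y\in\mathbb{F}_r^n$ the identity $\trf{l}{r}(\langle x,y\rangle) = \langle \trf{l}{r}(x),y\rangle$, where on the left $\trf{l}{r}$ is applied to a scalar in $\mathbb{F}_l$ and on the right coordinatewise to the vector $x$; this identity is the computational engine of the whole argument.

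\textbf{First inclusion: $\tr(C^{\bot})\subseteq (C|\mathbb{F}_r)^{\bot}$.} Take any $x\in C^\bot$ and any $y\in C|\mathbb{F}_r = C\cap\mathbb{F}_r^n$. Then $\langle x,y\rangle=0$ in $\mathbb{F}_l$ because $y\in C$ and $x\in C^\bot$. Applying the trace and using the identity above, $0=\trf{l}{r}(\langle x,y\rangle)=\langle\trf{l}{r}(x),y\rangle$. Since this holds for every $y\in C|\mathbb{F}_r$ and $\trf{l}{r}(x)\in\mathbb{F}_r^n$, we conclude $\trf{l}{r}(x)\in (C|\mathbb{F}_r)^\bot$. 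As $x$ ranges over $C^\bot$, this shows $\tr(C^\bot)\subseteq(C|\mathbb{F}_r)^\bot$. This direction is essentially immediate once the trace-adjointness identity is in hand.

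\textbf{Second inclusion (the harder direction): $(C|\mathbb{F}_r)^{\bot}\subseteq \tr(C^{\bot})$.} Here I would argue by a dimension count rather than by a direct element chase, since producing an explicit preimage in $C^\bot$ for a given vector in $(C|\mathbb{F}_r)^\bot$ is awkward. Both sides are now known to satisfy $\tr(C^\bot)\subseteq(C|\mathbb{F}_r)^\bot$, so it suffices to show $\dim_{\mathbb{F}_r}\tr(C^\bot) \geq \dim_{\mathbb{F}_r}(C|\mathbb{F}_r)^\bot$, which (since both are subspaces of $\mathbb{F}_r^n$) forces equality. Writing $k=\dim_{\mathbb{F}_l}C$, we have $\dim_{\mathbb{F}_r}(C|\mathbb{F}_r)^\bot = n-\dim_{\mathbb{F}_r}(C|\mathbb{F}_r)$. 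The key sub-lemma is the duality between restriction and trace at the level of dimensions: one shows $\dim_{\mathbb{F}_r}(C|\mathbb{F}_r) + \dim_{\mathbb{F}_r}\tr(C^\bot) = n$. To get this, fix an $\mathbb{F}_r$-basis of $\mathbb{F}_l$ and expand $C$ coordinatewise, realizing $C$ as an $\mathbb{F}_r$-linear subspace $\tilde C$ of $\mathbb{F}_r^{nh}$ of dimension $hk$; then $C|\mathbb{F}_r$ is identified with a certain intersection of $\tilde C$ with a coordinate subspace, while $\tr(C^\bot)$ is the image of $\widetilde{C^\bot}$ under the $\mathbb{F}_r$-linear surjection that adds up the block coordinates. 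Using $\dim_{\mathbb{F}_r}\widetilde{C^\bot}=h(n-k)$ and the rank–nullity relationship between the "project to a block" map and the "sum the blocks" map — which are, with respect to the trace form, mutually transpose operations — one obtains the desired additivity. I expect \textbf{this dimension bookkeeping to be the main obstacle}: one must be careful that the non-degeneracy of the trace form is exactly what makes the projection and trace-summation maps adjoint, and hence makes the two dimensions add up to $n$. Once the sub-lemma is proved, combining it with the first inclusion gives $\tr(C^\bot) = (C|\mathbb{F}_r)^\bot$, completing the proof.
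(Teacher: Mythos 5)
The paper itself gives no proof of this statement (it is quoted from Delsarte's article), so your argument has to stand on its own. Your first inclusion $\tr(C^{\perp})\subseteq (C|\mathbb{F}_r)^{\perp}$ is complete and correct. The hard direction, however, is only a plan, and the plan has a genuine gap exactly at the point you yourself call ``the main obstacle'': the sub-lemma $\dim_{\mathbb{F}_r}(C|\mathbb{F}_r)+\dim_{\mathbb{F}_r}\tr(C^{\perp})=n$ is equivalent to the theorem, and the justification you offer (``rank--nullity between the block-projection and block-sum maps, which are mutually transpose'') omits the step carrying all the content. To make it work you must expand $C$ in an $\mathbb{F}_r$-basis $e_1=1,e_2,\dots,e_h$ of $\mathbb{F}_l$ and $C^{\perp}$ in the trace-dual basis $e_1^{*},\dots,e_h^{*}$, and then prove that the expansion of $C^{\perp}$ is \emph{exactly} the $\mathbb{F}_r$-dual in $\mathbb{F}_r^{nh}$ of the expansion $\tilde C$ of $C$: the inclusion comes from $\trf{l}{r}(\langle x,y\rangle)=0$, and equality from comparing $\dim_{\mathbb{F}_r}C^{\perp}=h(n-k)$ with $nh-hk$. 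Only after this identification do the adjoint maps give that the block-sum image of $\tilde C^{\perp}$ is the annihilator of the embedded $C|\mathbb{F}_r$, whence the additivity. This identification is precisely where the $\mathbb{F}_l$-linearity of $C$ and the non-degeneracy of the trace enter; they must enter, since the statement fails for merely $\mathbb{F}_r$-linear subspaces (take $n=1$, $r=2$, $l=4$, $C=\{0,\omega\}$: then $(C|\mathbb{F}_2)^{\perp}=\mathbb{F}_2$ while $\tr(C^{\perp})=\{0\}$). In your sketch these ingredients are hidden only in unproved dimension bookkeeping, so as written the second inclusion is not established. A minor further imprecision: the trace is not the unweighted ``sum of block coordinates'' but a weighted sum with weights $\trf{l}{r}(e_j^{*})$, which reduces to projection onto the first slot of each block exactly because $e_1=1$ and the bases are trace-dual.

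There is also a much shorter route for the hard direction, which needs no dimension count and avoids the preimage problem you were worried about: take $y\in\mathbb{F}_r^n$ orthogonal to $\tr(C^{\perp})$; then $\trf{l}{r}(\langle y,x\rangle)=0$ for every $x\in C^{\perp}$, and since $C^{\perp}$ is an $\mathbb{F}_l$-subspace you may replace $x$ by $\alpha x$ for arbitrary $\alpha\in\mathbb{F}_l$, so $\trf{l}{r}(\alpha\langle y,x\rangle)=0$ for all $\alpha$; non-degeneracy of the trace form forces $\langle y,x\rangle=0$, hence $y\in C^{\perp\perp}=C$ and so $y\in C|\mathbb{F}_r$. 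This shows $(\tr(C^{\perp}))^{\perp}\subseteq C|\mathbb{F}_r$, and taking $\mathbb{F}_r$-duals and combining with your easy inclusion gives $(C|\mathbb{F}_r)^{\perp}=\tr(C^{\perp})$. If you prefer your basis-expansion argument, it can be completed as indicated above, but the missing identification of the expanded dual with the dual of the expansion must be stated and proved, not merely alluded to.
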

In \cite{V05}, V\'eron pointed out that Delsarte's theorem can be used to compute from \eqref{eq:firstbound} the exact dimension 
\begin{equation} \label{eq:dels_veron_bound}
k_0 = n-h(n-k) +\dim_{\mathbb{F}_r}\ker(\tr)
\end{equation}
of the subfield subcode. 

\section{Main result}
\label{sec:mainresult}

With the above notation, let $l=q^2$ and $h=2m$. As before, let $\mathbb{F}_r$ be a subfield of $\mathbb{F}_{q^2}$, $q=r^m$, $s$ be a positive integer and $D$ be the sum of affine points of the Hermitian curve $X^{q+1}=Y+Y^q$ over the finite field $\mathbb{F}_{q^2}$. Define $C_{q,r}(s)$ to be the subfield subcode $\mathcal{H}(q^2,s)|{\mathbb{F}_r}$ of the $1$-point Hermitian code $\mathcal{H}(q^2,s)$. 

In \cite{pinero2014subfield}, an algorithm for $\dim C_{q,r}(s)$ is presented. Using this algorithm, the authors explicitly compute the dimension of $C_{4,2}(s)$ for each $s=0,\ldots,71$. 

From \cite[Proposition 3.2]{van1991dimension}, 
\[\dim(\tr(\mathcal{H}(q^2,q)))=2m+1,\]
where $q=2^m$. In our notation, this reads
\[\dim C_{q,r}(q^3+q^2-2q-2)=q^3-(2m+1).\]
In particular, $\dim C_{4,2}(70)=59$, which is confirmed by \cite[Table 2]{pinero2014subfield}. In the same table, we find $\dim C_{4,2}(s) = 1$ for $s=0,\ldots,31$ and $\dim C_{4,2}(32)=5$. These values for $\dim C_{4,2}(s)$ are particular cases of the general formula given by the following theorem. 

	\begin{theorem}
	Let $C_{q,r}(s)$ be a subfield subcode of the Hermitian code $\mathcal{H}(q^2,s)$, where $q = r^m$ is a prime power. Then
	\[
	\dim C_{q,r}(s) = \left\{
	\begin{array}{ll}
	1 & \mbox{for } 0 \leq s <\frac{q^3}{r} \\
	2m +1  & \mbox{for } s=\frac{q^3}{r}
	\end{array}
	\right.
	\]		  
	\end{theorem}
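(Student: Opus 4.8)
The plan is to translate the statement into one about Riemann--Roch functions. Since $s<n=q^3$ in both cases, the divisor $sP_\infty-D$ has negative degree, so $\mathscr L(sP_\infty-D)=0$ and the evaluation map $\mathscr L(sP_\infty)\to\mathbb F_{q^2}^n$ is injective. Hence $\dim_{\mathbb F_r}C_{q,r}(s)=\dim_{\mathbb F_r}V$, where
\[
V=\{\,f\in\mathscr L(sP_\infty)\mid f(P)\in\mathbb F_r\ \text{for every affine }\mathbb F_{q^2}\text{-rational point }P\,\},
\]
an $\mathbb F_r$-subspace that always contains $\mathbb F_r$ (the constant functions). The basic tool is the observation that, for $f\in V$, the function $g=f^r-f$ vanishes at all $q^3$ affine $\mathbb F_{q^2}$-rational points and has its only pole at $P_\infty$, of order at most $rs$; hence, if $g\neq0$, then $q^3\le\deg(g)_0=\deg(g)_\infty\le rs$. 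When $s<q^3/r$ this is impossible, so $g=0$, i.e.\ $f^r=f$; as the function field of $\mathscr H_q$ is an integral domain, $f$ must be a constant, necessarily in $\mathbb F_r$. Therefore $V=\mathbb F_r$ and $\dim C_{q,r}(s)=1$, which is the first case.

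For the case $s=q^3/r$ the extra ingredient is the divisor of $x^{q^2}-x=\prod_{a\in\mathbb F_{q^2}}(x-a)$. This function vanishes at every affine $\mathbb F_{q^2}$-rational point (because $a^{q^2}=a$) and has a single pole, at $P_\infty$, of order $q\cdot q^2=q^3$ by \eqref{eq:vPinfty}; since its zero divisor then has degree $q^3=\deg D$ and dominates $D$, it equals $D$, so $(x^{q^2}-x)=D-q^3P_\infty$. Consequently, for $f\in V$, since now $rs=q^3$, the estimate of the first paragraph becomes an equality: either $g=0$, or $(g)_0=D$ and $(g)_\infty=q^3P_\infty$, so that $(g)=(x^{q^2}-x)$ and hence $g=\lambda(x^{q^2}-x)$ for a unique $\lambda\in\mathbb F_{q^2}^\times$.

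This gives a well-defined map $\psi\colon V\to\mathbb F_{q^2}$, $f\mapsto\lambda$ (with $\psi(f)=0$ when $g=0$). From the identities $(f_1+f_2)^r-(f_1+f_2)=(f_1^r-f_1)+(f_2^r-f_2)$ and $(cf)^r-cf=c(f^r-f)$ for $c\in\mathbb F_r$ (using $c^r=c$), the map $\psi$ is $\mathbb F_r$-linear, and its kernel is $\{f\in\mathscr L(sP_\infty)\mid f^r=f\}=\mathbb F_r$. Therefore $\dim_{\mathbb F_r}V\le 1+\dim_{\mathbb F_r}\mathbb F_{q^2}=2m+1$, which is the required upper bound.

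It remains to prove that $\psi$ is surjective. For $c\in\mathbb F_{q^2}$ I would take $f_c=\sum_{j=0}^{2m-1}(cx)^{r^j}=\trf{q^2}{r}(cx)$. Each $x^{r^j}$ with $0\le j\le 2m-1$ has its only pole at $P_\infty$, of order $qr^j=r^{m+j}\le r^{3m-1}=s$, so $f_c\in\mathscr L(sP_\infty)$; and $f_c(P)=\trf{q^2}{r}\bigl(c\,x(P)\bigr)\in\mathbb F_r$ at every affine rational point $P$, so $f_c\in V$. A telescoping computation gives $f_c^r-f_c=(cx)^{q^2}-cx=c(x^{q^2}-x)$, hence $\psi(f_c)=c$. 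Thus $\psi$ is onto, $\dim_{\mathbb F_r}V=2m+1$, and the second case follows. The one genuinely delicate point is the rigidity $g=\lambda(x^{q^2}-x)$ obtained from the divisor identity; everything else is pole-order bookkeeping through \eqref{eq:vPinfty} and linear algebra over $\mathbb F_r$.
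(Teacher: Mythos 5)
Your proof is correct, but it reaches the theorem by a genuinely different route than the paper. For $0\le s<q^3/r$ the paper argues by pigeonhole: some value $\gamma\in\mathbb{F}_r$ is attained at least $q^3/r>s$ times, so $f-\gamma$ has more zeros than $\deg(sP_\infty)$ allows and must vanish; you instead apply the Artin--Schreier-type operator $f\mapsto f^r-f$, which kills all $q^3$ evaluation points at once and forces $f\in\mathbb{F}_r$ when $rs<q^3$. For $s=q^3/r$ the paper proves the upper bound by writing $g=\sum_{j<q}a_{i,j}x^iy^j$ in reduced form, using that the pole orders $qi+(q+1)j$ are pairwise distinct to identify the top term as $\beta x^{q^2/r}$, and subtracting a suitable $\tr(\alpha x)$ to drop back into the first case; you instead exploit the divisor identity $(x^{q^2}-x)=D-q^3P_\infty$ to show that $f^r-f$ is forced to be $\lambda(x^{q^2}-x)$, obtaining an $\mathbb{F}_r$-linear map $\psi\colon V\to\mathbb{F}_{q^2}$ with kernel $\mathbb{F}_r$, whence $\dim V\le 2m+1$ by rank--nullity, with surjectivity (hence equality) witnessed by the same trace functions $\trf{q^2}{r}(cx)$ that the paper uses for its lower bound. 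Your argument is more structural: it avoids the reduction modulo the curve equation and the distinct-pole-order bookkeeping, treats both cases by one mechanism, and yields for free the injectivity of $(d,\alpha)\mapsto\mathbf{c}_{d,\alpha}$ that the paper only asserts; the paper's computation, on the other hand, is more explicit about the monomial structure of $\mathscr{L}(sP_\infty)$, which is what it reuses in the closing remark about $\tr(\alpha y)$ and $s=(q+1)q^2/r$. Only cosmetic points to tighten: justify that $\lambda$ lies in $\mathbb{F}_{q^2}$ (the quotient $g/(x^{q^2}-x)$ has trivial divisor, hence lies in $\mathscr{L}(0)$, which is spanned by the constants since $\mathbb{F}_{q^2}$ is the full constant field), and spell out that $f^r=f$ implies $f\in\mathbb{F}_r$ via the factorization of $T^r-T$ over $\mathbb{F}_r$ in the (field) $\mathbb{F}_{q^2}(\mathscr{H}_q)$.
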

\begin{proof}
Since the constant polynomials are in $\mathscr{L}(sP_\infty)$ for all $s\geq 0$, we have $\dim C_{q,r}(s)\geq 1$. We first show that $\dim C_{q,r}(s) = 1$ for $0 \leq s <\frac{q^3}{r}$. Fix an integer $0<s<\frac{q^3}{r}$ and take an arbitrary element $(c_1,\ldots,c_{q^3}) \in C_{q,r}(s)$. Then there is an element $f\in \mathscr{L}(sP_\infty)$ such that for all $i=1,\ldots,q^3$, one has $c_i=f(P_i)\in \mathbb{F}_r$. There is an element $\gamma \in \mathbb{F}_r$ such that $c_i=\gamma$ for at least $q^3/r$ indices $i$. In other words, $f-\gamma \in \mathscr{L}(sP_\infty)$ has at least $q^3/r$ zeros on the Hermitian curve $\mathscr{H}_q$. ( This follows from the fact that for a positive divisor $G$, a non-zero element of $\mathscr{L}(G)$ cannot have more than $\deg G$ zeros.)
%In fact, a nonzero element of $\mathscr{L}(G)$ cannot have more than $\deg G$ zeros on the curve.
Therefore, $f-\gamma$ must be the constant zero polynomial, and $c_i=\gamma$ for all $i$. In particular, $C_{q,r}(s)$ consists of the constant vectors. 

Now, we suppose that $s = q^3/r$. Recall that 
\[\tr(X)=X+X^r+\cdots+X^{r^{2m-1}}\]
is the trace polynomial of $\mathbb{F}_{q^2}$ over $\mathbb{F}_r$. We define the polynomial
\[
f_{d,\alpha}(X) = d+ \tr (\alpha X)
\] 
where $d\in \mathbb{F}_r$, $\alpha \in \mathbb{F}_{q^2}$. As a polynomial in one variable, $f_{d,\alpha}$ maps $\mathbb{F}_{q^2}$ to $\mathbb{F}_r$. For a point $P$ with affine coordinates $(x,y)$, we write $f_{d,\alpha}(P) = f_{d,\alpha}(x)$. For the $\mathbb{F}_{q2}$-rational points $P_i (a_i, b_i)$, $i=1,\dots,q^3$, we have $f_{d,\alpha}(P_i) \in \mathbb{F}_r$. In other words, the evaluation vector
\[\mathbf{c}_{d,\alpha} = (f_{d,\alpha}(P_1),\ldots,f_{d,\alpha}(P_{q^3})) \in \mathbb{F}_r^n.\]
We claim that $f_{d,\alpha}(x) \in  \mathscr{L}\left(\frac{q^3}{r} P_{\infty}\right)$. In fact, by \eqref{eq:vPinfty},
\[v_{P_\infty}(x^{r^k})=qr^k,\]
which is at most $qr^{2m-1}=q^3/r$ for $k\leq 2m-1$. Hence, all monomials of $f_{d,\alpha}(x)$ are in  $\mathscr{L}\left(\frac{q^3}{r} P_{\infty}\right)$, and the claim follows. 

From the last two properties of $f_{d,\alpha}$ follows that the evaluation vector $\mathbf{c}_{d,\alpha} \in C_{q,r}(q^3/r)$. Since the map $(d,\alpha) \mapsto \mathbf{c}_{d,\alpha}$ is linear over $\mathbb{F}_r$, and injective, we have $\dim C_{q,r}(q^3/r)\geq 2m+1$. 

In the last step we show that the elements $\mathbf{c}_{d,\alpha}$ exhaust the subfield subcode $C_{q,r}(q^3/r)$. 

Take an element $g\in \mathscr{L}\left(\frac{q^3}{r} P_{\infty}\right)$ whose evaluation vector 
\[(g(P_1),\ldots,g(P_{q^3}))\in \mathbb{F}_r^n.\]
We can reduce the high $y$-degree terms by the Hermitian equation $x^{q+1}=y+y^q$. Thus, we can write $g$ in this form: 
\[g(x,y)=\sum\limits_{j<q} a_{i,j} x^i y^j. \]
By \eqref{eq:vPinfty}, the order of $x^iy^j$ at $P_\infty$ satisfies $v_{P_\infty} (x^iy^j)\equiv j \pmod q$. Therefore, if $j \leq q-1$ then the order $v_{P_\infty} (x^iy^j)$ determines $i$ and $j$ uniquely. Hence, different terms of $g=\sum\limits_{j\leq q-1} a_{i,j} x^i y^j$ have different orders at $P_\infty$. 
%By definition we have
%\[
%\rho(x^iy^j)=\mathit{v}_{P_{\infty}}(x^iy^j)=i\mathit{v}_{P_{\infty}}(x)+j\mathit{v}_{P_{\infty}}(y)=qi +(q+1)j.
%\]
The order of $g$ at $P_\infty$ is 
\[\mathit{v}_{P_{\infty}} (g)= \mathit{v}_{P_{\infty}} \left( \sum a_{i,j} x^i y^j  \right) = \max\limits_{a_{i,j}\neq 0} \left( \mathit{v}_{P_{\infty}}(x^iy^j)\right), \]
where the last equality holds since the orders $v_{P_\infty} (x^iy^j)$ are different. If $g\in \mathscr{L}\left( \left( \frac{q^3}{r}-1 \right) P_{\infty} \right)$ then $g=f_{d,0}$ for some $d\in \mathbb{F}_r$ as seen above. Assume now 
\[g\in \mathscr{L}\left( \frac{q^3}{r} P_{\infty} \right) \bigg\backslash \mathscr{L}\left( \left( \frac{q^3}{r}-1 \right) P_{\infty} \right).\]
Then, $v_{P_\infty}(g)=q^3/r$ and $g$ has a unique term $\beta x^{\frac{q^2}{r}}$ with order $q^3/r$ at $P_\infty$, $\beta \in \mathbb{F}_{q^2}^*$. Define $\alpha \in \mathbb{F}_{q^2}$ by $\alpha^{r^{2m-1}}=\beta$. Then, $g-f_{0,\alpha} \in \mathscr{L}\left(\frac{q^3}{r} P_{\infty}\right)$ and arguing as in the first part of the proof, shows that $g-f_{0,\alpha}$  is again a constant $d\in \mathbb{F}_r$. This means $g=f_{d,\alpha}$, and the result follows. 
\end{proof}

Similar computation gives that for $\alpha \in \mathbb{F}_{q^2}$, 
\[\tr(\alpha y)\in \mathscr{L}\left( \frac{(q+1)q^2}{r} P_{\infty} \right).\]
Hence, $\dim C_{q,r}((q+1)q^2/r) \geq 4m+1$. By \cite[Table 2]{pinero2014subfield}, we have equality for $q=4$ and $r=2$. Using our GAP package HERmitian \cite{HERmitian01}, we computed the true dimension of $C_{8,2}(s)$ for all values $s$ from $256=q^3/r$ to $511=q^3 -1$, see Table \ref{tab:1}.

\begin{table}
\caption{Parameters of $C_{8,2}(s)$ for $s\in\{256,\ldots,511\}$ \label{tab:1}}
\begin{tabular}{|c|c|c|c|c|c|c|}
\hline 
$s$ & $\dim C_{8,2}(s)$& $\dim \mathcal{H}(64,s)$ & & $s$ & $\dim C_{8,2}(s)$& $\dim \mathcal{H}(64,s)$\\ 
\hline \hline 
256 & 7 & 229 &  & 456 & 206 & 429 \\ \hline 
288 & 13 & 261 &  & 457 & 212 & 430 \\ \hline 
292 & 19 & 265 &  & 458 & 218 & 431 \\ \hline 
320 & 25 & 293 &  & 460 & 224 & 433 \\ \hline 
324 & 28 & 297 &  & 462 & 226 & 435 \\ \hline 
328 & 34 & 301 &  & 464 & 232 & 437 \\ \hline 
336 & 36 & 309 &  & 466 & 238 & 439 \\ \hline 
352 & 42 & 325 &  & 468 & 244 & 441 \\ \hline 
356 & 48 & 329 &  & 470 & 250 & 443 \\ \hline 
360 & 54 & 333 &  & 472 & 256 & 445 \\ \hline 
364 & 60 & 337 &  & 473 & 262 & 446 \\ \hline 
368 & 66 & 341 &  & 474 & 268 & 447 \\ \hline 
376 & 72 & 349 &  & 475 & 274 & 448 \\ \hline 
378 & 74 & 351 &  & 480 & 280 & 453 \\ \hline 
384 & 80 & 357 &  & 482 & 286 & 455 \\ \hline 
392 & 86 & 365 &  & 484 & 292 & 457 \\ \hline 
400 & 92 & 373 &  & 486 & 295 & 459 \\ \hline 
402 & 98 & 375 &  & 488 & 301 & 461 \\ \hline 
408 & 104 & 381 &  & 489 & 307 & 462 \\ \hline 
410 & 110 & 383 &  & 490 & 313 & 463 \\ \hline 
416 & 116 & 389 &  & 491 & 319 & 464 \\ \hline 
418 & 122 & 391 &  & 492 & 325 & 465 \\ \hline 
420 & 128 & 393 &  & 493 & 331 & 466 \\ \hline 
424 & 134 & 397 &  & 496 & 337 & 469 \\ \hline 
428 & 140 & 401 &  & 498 & 343 & 471 \\ \hline 
432 & 146 & 405 &  & 500 & 349 & 473 \\ \hline 
434 & 152 & 407 &  & 502 & 355 & 475 \\ \hline 
436 & 158 & 409 &  & 504 & 361 & 477 \\ \hline 
438 & 164 & 411 &  & 505 & 367 & 478 \\ \hline 
440 & 170 & 413 &  & 506 & 373 & 479 \\ \hline 
442 & 176 & 415 &  & 507 & 379 & 480 \\ \hline 
444 & 182 & 417 &  & 508 & 385 & 481 \\ \hline 
448 & 188 & 421 &  & 509 & 391 & 482 \\ \hline 
450 & 194 & 423 &  & 510 & 397 & 483 \\ \hline 
452 & 200 & 425 &  & 511 & 403 & 484 \\ \hline 
\end{tabular} 
\end{table}

%%%%%%%%%%%%%%%%%%%%%%%%%%%%%%%
\medskip
%\nocite{*}
\bibliographystyle{abbrv}
\bibliography{Codes}

\begin{thebibliography}{10}

\bibitem{augot2011list}
D.~Augot, M.~Barbier, and A.~Couvreur.
\newblock List-decoding of binary {G}oppa codes up to the binary {J}ohnson
  bound.
\newblock In {\em 2011 IEEE Information Theory Workshop}, pages 229--233. IEEE,
  2011.

\bibitem{baldi2009ldpc}
M.~Baldi.
\newblock L{D}{P}{C} codes in the {M}c{E}liece cryptosystem: Attacks and
  {C}ountermeasures.
\newblock {\em Enhancing Cryptographic Primitives with Techniques from Error
  Correcting Codes}, 23:160--174, 2009.

\bibitem{bernstein2011list}
D.~J. Bernstein.
\newblock List decoding for binary {G}oppa codes.
\newblock In {\em International Conference on Coding and Cryptology}, pages
  62--80. Springer, 2011.

\bibitem{bernstein2008attacking}
D.~J. Bernstein, T.~Lange, and C.~Peters.
\newblock Attacking and defending the {M}c{E}liece cryptosystem.
\newblock In {\em International Workshop on Post-Quantum Cryptography}, pages
  31--46. Springer, 2008.

\bibitem{courtois2001achieve}
N.~T. Courtois, M.~Finiasz, and N.~Sendrier.
\newblock How to achieve a {M}c{E}liece-based digital signature scheme.
\newblock In {\em International Conference on the Theory and Application of
  Cryptology and Information Security}, pages 157--174. Springer, 2001.

\bibitem{Del}
P.~Delsarte.
\newblock On subfield subcodes of modified {R}eed-{S}olomon codes.
\newblock {\em IEEE Trans. Information Theory}, IT-21(5):575--576, 1975.

\bibitem{elia1999decoding}
M.~Elia, E.~Viterbo, and G.~Bertinetti.
\newblock Decoding of binary separable {G}oppa codes using {B}erlekamp-{M}assey
  algorithm.
\newblock {\em Electronics Letters}, 35(20):1720--1721, 1999.

\bibitem{hirschfeld2008algebraic}
J.~W.~P. Hirschfeld, G.~Korchm\'{a}ros, and F.~Torres.
\newblock {\em Algebraic curves over a finite field}.
\newblock Princeton Series in Applied Mathematics. Princeton University Press,
  Princeton, NJ, 2008.

\bibitem{hoholdt1995decoding}
T.~Hoholdt and R.~Pellikaan.
\newblock On the decoding of algebraic-geometric codes.
\newblock {\em IEEE Transactions on Information Theory}, 41(6):1589--1614,
  1995.

\bibitem{li1994equivalence}
Y.~X. Li, R.~H. Deng, and X.~M. Wang.
\newblock On the equivalence of {M}c{E}liece's and {N}iederreiter's public-key
  cryptosystems.
\newblock {\em IEEE Transactions on Information Theory}, 40(1):271--273, 1994.

\bibitem{LoidreauSendrier}
P.~Loidreau and N.~Sendrier.
\newblock Weak keys in the {M}c{E}liece public-key cryptosystem.
\newblock {\em IEEE Trans. Inform. Theory}, 47(3):1207--1211, 2001.

\bibitem{menezes2013applications}
A.~J. Menezes, I.~F. Blake, X.~Gao, R.~C. Mullin, S.~A. Vanstone, and
  T.~Yaghoobian.
\newblock {\em Applications of finite fields}, volume 199.
\newblock Springer Science \& Business Media, 2013.

\bibitem{misoczki2009compact}
R.~Misoczki and P.~S. Barreto.
\newblock {C}ompact {M}c{E}liece keys from {G}oppa codes.
\newblock In {\em International Workshop on Selected Areas in Cryptography},
  pages 376--392. Springer, 2009.

\bibitem{HERmitian01}
G.~P. Nagy and S.~El~Khalfaoui.
\newblock {HERmitian}, {C}omputing with divisors, {R}iemann-{R}och spaces and
  {AG}-odes of {H}ermitian curves, {V}ersion 0.1.
\newblock {\texttt{https://github.com/nagygp/Hermitian}}, Mar 2019.
\newblock GAP package.

\bibitem{pellikaan1993efficient}
R.~Pellikaan.
\newblock On the efficient decoding of algebraic-geometric codes.
\newblock In {\em Eurocode’92}, pages 231--253. Springer, 1993.

\bibitem{pinero2014subfield}
F.~Pi{\~n}ero and H.~Janwa.
\newblock On the subfield subcodes of {H}ermitian codes.
\newblock {\em Designs, codes and cryptography}, 70(1-2):157--173, 2014.

\bibitem{sakata1995generalized}
S.~Sakata, H.~E. Jensen, and T.~Hoholdt.
\newblock Generalized {B}erlekamp-{M}assey decoding of algebraic-geometric
  codes up to half the {F}eng-{R}ao bound.
\newblock {\em IEEE Transactions on Information Theory}, 41(6):1762--1768,
  1995.

\bibitem{stepanov2012codes}
S.~A. Stepanov.
\newblock {\em Codes on algebraic curves}.
\newblock Springer Science \& Business Media, 2012.

\bibitem{stichtenoth2009algebraic}
H.~Stichtenoth.
\newblock {\em Algebraic function fields and codes}, volume 254.
\newblock Springer Science \& Business Media, 2009.

\bibitem{van1990true}
M.~Van~der Vlugt.
\newblock The true dimension of certain binary {G}oppa codes.
\newblock {\em IEEE transactions on information theory}, 36(2):397--398, 1990.

\bibitem{van1991dimension}
M.~van~der Vlugt.
\newblock On the dimension of trace codes.
\newblock {\em IEEE Transactions on Information Theory}, 37(1):196--199, 1991.

\bibitem{V98}
P.~V{\'e}ron.
\newblock Goppa codes and trace operator.
\newblock {\em IEEE Trans. Inform. Theory}, 44(1):290--294, 1998.

\bibitem{V01}
P.~V{\'er}on.
\newblock True dimension of some binary quadratic trace {G}oppa codes.
\newblock {\em Des. Codes Cryptogr.}, 24(1):81--97, 2001.

\bibitem{V05}
P.~V{\'e}ron.
\newblock Proof of conjectures on the true dimension of some binary {G}oppa
  codes.
\newblock {\em Des. Codes Cryptogr.}, 36(3):317--325, 2005.

\bibitem{wieschebrink2010cryptanalysis}
C.~Wieschebrink.
\newblock Cryptanalysis of the {N}iederreiter public key scheme based on
  {G}{R}{S} subcodes.
\newblock In {\em International Workshop on Post-Quantum Cryptography}, pages
  61--72. Springer, 2010.

\bibitem{yan2013quantum}
S.~Y. Yan.
\newblock {\em {Q}uantum attacks on public-key cryptosystems}.
\newblock Springer, 2013.

\end{thebibliography}
\end{document}